\newtheorem{thm}{Theorem}
\newtheorem{lem}[thm]{Lemma}
\newtheorem{conj}[thm]{Conjecture}
\newtheorem{cor}[thm]{Corollary}
\newtheorem*{thm*}{Theorem}
\theoremstyle{definition}
\theoremstyle{remark}
 \newcommand{\calP}{\mathcal{P}}
\newcommand{\calC}{\mathcal{C}}
\newcommand{\RR}{\mathbb{R}}
\newcommand{\bo}{\ensuremath{\mathrm{box}}}
\newcommand{\fw}{\ensuremath{\mathrm{fw}}}
\renewcommand{\le}{\leqslant}
\renewcommand{\leq}{\leqslant}
\renewcommand{\ge}{\geqslant}
\renewcommand{\geq}{\geqslant}
\begin{document}
\title{Boxicity of graphs on surfaces}
\author{Louis Esperet} \address{Laboratoire G-SCOP (CNRS,
  Grenoble-INP), Grenoble, France}
\email{louis.esperet@g-scop.fr}

\author{Gwena\"el Joret} \address{D\'epartement d'Informatique,
  Universit\'e Libre de Bruxelles, Brussels, Belgium}
\email{gjoret@ulb.ac.be}

\thanks{ This work was supported in part by the Actions de Recherche
  Concert\'ees (ARC) fund of the Communaut\'e fran\c{c}aise de
  Belgique.  Louis Esperet is partially supported by ANR Project
  Heredia under Contract \textsc{anr-10-jcjc-heredia}.  Gwena\"el
  Joret is a Postdoctoral Researcher of the Fonds National de la
  Recherche Scientifique (F.R.S.--FNRS)}

\date{}
\sloppy

\begin{abstract}
  The boxicity of a graph $G=(V,E)$ is the least integer $k$ for
  which there exist $k$ interval graphs $G_i=(V,E_i)$, $1 \le i \le
  k$, such that $E=E_1 \cap \cdots \cap E_k$.  Scheinerman proved in
  1984 that outerplanar graphs have boxicity at most two and Thomassen
  proved in 1986 that planar graphs have boxicity at most three.  In
  this note we prove that the boxicity of toroidal graphs is at most
  7, and that the boxicity of graphs embeddable in a surface $\Sigma$ of 
  genus $g$ is at most $5g+3$. This result yields improved bounds
  on the dimension of the adjacency poset of graphs on surfaces.
\end{abstract}
\maketitle

\section{Introduction}

Given a collection ${\calC}$ of subsets of a set $\Omega$, the
\emph{intersection graph} of ${\calC}$ is defined as the
graph\footnote{Graphs in this paper are finite, simple, and
  undirected.}  with vertex set ${\calC}$, in which two elements of
${\calC}$ are adjacent if and only if their intersection is non empty.
A \emph{$d$-box} is the Cartesian product $[x_1,y_1] \times \cdots
\times [x_d,y_d]$ of $d$ closed intervals of the real line.  The
\emph{boxicity} $\bo(G)$ of a graph $G$ is the least integer $d
\geq 1$ such that $G$ is the intersection graph of a collection of
$d$-boxes\footnote{It is sometimes considered that complete graphs
  have boxicity 0, but we find this confusing and hence do not take this
  convention. However we made sure that all results from papers
  following this convention that are quoted in this article are used
  safely in our proofs.}. An \emph{interval graph} is a graph of
boxicity one.

The \emph{intersection} $G_1 \cap \cdots \cap G_k$ of $k$ graphs
$G_1,\ldots ,G_k$ defined on the same vertex set $V$ is the graph
$(V,E_1 \cap \cdots \cap E_k)$, where $E_{i}$ ($1 \le i \le k$)
denotes the edge set of $G_{i}$.  Observe that the boxicity of a graph
$G$ can equivalently be defined as the least integer $k$ such that $G$ is
the intersection of $k$ interval graphs.

The concept of boxicity was introduced in 1969 by Roberts
\cite{Rob69}. It is used as a measure of the complexity of ecological
\cite{Rob76} and social \cite{Fre83} networks, and has applications in
fleet maintenance \cite{OR81}. Graphs with boxicity one (that is,
interval graphs) can be recognized in linear time. On the other hand,
Kratochv\'il \cite{Kra94} proved that determining whether a graph has
boxicity at most two is NP-complete.

Scheinerman proved in 1984 that outerplanar graphs have boxicity at
most two \cite{Sch84} and Thomassen proved in 1986 that planar graphs
have boxicity at most three \cite{Tho86}.  Other results on the
boxicity of graphs can be found in~\cite{ABC10, CS07,CFS08} and the
references therein.

Related to boxicity is the notion of adjacency posets of graphs, which
was introduced by Felsner and Trotter~\cite{FT00}. The {\em adjacency
  poset} of a graph $G=(V,E)$ is the poset $\calP_{G}=(W, \leq)$ with
$W=V \cup V'$, where $V'$ is a disjoint copy of $V$, and such that $u
\leq v$ if and only if $u=v$, or $u\in V$ and $v\in V'$ and $u,v$
correspond to two distinct vertices of $G$ which are adjacent in $G$.
Let us recall that the {\em dimension $\dim(\calP)$} of a poset
$\calP$ is the minimum number of linear orders whose intersection is
exactly $\calP$.

Felsner, Li, and Trotter~\cite{FLT10} recently showed that $\dim(\calP_{G})
\leq 5$ for every outerplanar graph $G$, and that $\dim(\calP_{G})
\leq 8$ for every planar graph $G$.  They also proved that
$\dim(\calP_{G}) \leq \frac{3}{2}\chi_a(G)(\chi_a(G)-1)$ for every
graph $G$ with $\chi_a(G) \geq 2$, where $\chi_a(G)$ denotes the
\emph{acyclic chromatic number} of $G$ (the least integer $k$ so that
$G$ can be properly colored with $k$ colors, in such a way that every
two color classes induce a forest).  Using a result of Alon, Mohar,
and Sanders~\cite{AMS96}, this implies that the dimension of
$\calP_{G}$ is $O(g^{8/7})$ when $G$ is embeddable in a surface of
genus $g$.   At the end of their paper, the
authors of~\cite{FLT10} write that it is likely that the $O(g^{8/7})$
upper bound on $\dim(\calP_{G})$ could be improved to $O(g)$.

In this note, we first observe that the boxicity can also be bounded
from above by a function of the acyclic chromatic number, namely
$\bo(G) \le \chi_a(G)(\chi_a(G)-1)$ for every graph $G$ with
$\chi_a(G) \geq 2$.  Next, using a result of Adiga, Bhowmick, and
Chandran~\cite{ABC10}, we relate $\dim(\calP_{G})$ to $\bo(G)$ by
observing that $\dim(\calP_{G}) \leq 2\,\bo(G)+\chi(G)+4$ for every
graph $G$ (here $\chi(G)$ denotes the chromatic number of $G$).  Then
we prove that $\bo(G) \leq 5g+3$ for every graph embeddable in a
surface of genus $g$.  This implies that $\dim(\calP_{G})$ is bounded by a linear function of $g$, thus
confirming the suggestion of Felsner {\em et al.}~\cite{FLT10}
mentioned above.  We also consider more closely the case of toroidal
graphs and show that every such graph has boxicity at most $7$, while
there are toroidal graphs with boxicity $4$. We conclude the paper
with several remarks and open problems about the boxicity of graphs on
surfaces.

\section{Boxicity and acyclic coloring}\label{sec:acy}

It can be deduced from~\cite{ABC10}, or directly from~\cite{Spe72},
that the graph obtained from the complete graph $K_n$ by subdividing
each edge precisely once has boxicity $\Theta(\log \log n)$. This
graph is 2-degenerate, hence the boxicity of a graph cannot be bounded
from above by a function of its degeneracy\footnote{ A graph $G$ is
  {\em $k$-degenerate} if every subgraph of $G$ 
  has a vertex with degree at most $k$. The {\em degeneracy} of $G$ is
  the least integer $k$ such that $G$ is $k$-degenerate.  } or 
chromatic number alone.  However, the boxicity can be bounded by a
function of the {\em acyclic} chromatic number, as we now show.\\

For a graph $G$ and a subset $X$ of vertices of $G$, we let $G[X]$ denote
the subgraph of $G$ induced by $X$, and let $G\setminus X$ denote the
graph obtained from $G$ by removing all vertices in $X$.

Consider a graph $G$ and a subset $X$ of vertices of $G$ together with
an interval graph $I$ on the vertex set $X$, such that $I$ is a
supergraph of $G[X]$. Assume without loss of generality that $I$ maps
all the vertices of $X$ to subintervals of some interval $[l,r]$ of
$\RR$. We call \emph{a canonical extension of $I$ to $G$} the
interval graph $I'$ defined by mapping the vertices of $X$ to their
corresponding intervals in $I$, and all other vertices of $G$ to the
interval $[l,r]$. Observe that a canonical extension of $I$ to $G$
is a supergraph of $G$.

\begin{lem}\label{lem:acy}
$\bo(G) \le \chi_a(G)(\chi_a(G)-1)$ for every graph $G$ with
  $\chi_a(G) \geq 2$.
\end{lem}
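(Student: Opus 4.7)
The plan is to use an optimal acyclic coloring to partition $V(G)$ into color classes, exploit the fact that the union of any two color classes induces a forest, and combine interval representations of these forests via the canonical extension mechanism just introduced.

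More concretely, let $k = \chi_a(G)$ and fix an acyclic $k$-coloring with color classes $V_1,\dots,V_k$. For each unordered pair $\{i,j\}$ with $i\ne j$, the subgraph $F_{ij} := G[V_i \cup V_j]$ is a forest by definition of acyclic coloring. Forests are outerplanar, so by Scheinerman's theorem $\bo(F_{ij}) \le 2$; hence there exist two interval graphs $I_{ij,1}, I_{ij,2}$ on vertex set $V_i \cup V_j$ whose intersection is $F_{ij}$. Let $I'_{ij,1}, I'_{ij,2}$ denote their canonical extensions to $G$. This yields $2\binom{k}{2} = k(k-1)$ interval graphs on $V(G)$, all supergraphs of $G$, and the claim will follow if I show that their intersection is exactly $G$.

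One direction is free: each canonical extension contains $G$ as a subgraph, so every edge of $G$ is in every $I'_{ij,t}$. For the converse, consider a non-edge $uv$ of $G$ with $u\in V_i$ and $v\in V_j$. If $i\ne j$, then both $u$ and $v$ lie in $V_i\cup V_j$ and $uv$ is a non-edge of the forest $F_{ij}$, so $uv$ is missing from at least one of $I_{ij,1}, I_{ij,2}$; since canonical extension does not alter the intervals assigned to vertices of the base set, $uv$ is still missing from the corresponding $I'_{ij,t}$. If $i = j$, then pick any $j'\ne i$: both $u$ and $v$ sit in $V_i \cup V_{j'}$, and $uv$ is again a non-edge of the forest $F_{ij'}$ (they share a color), so the same argument applies.

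The only genuine step to check is the existence of a boxicity-$2$ representation of the bipartite forest $F_{ij}$ and the fact that canonical extension preserves non-adjacencies between pairs of vertices both lying in the base set $X$; both are direct from the definitions given, so I do not expect a real obstacle here. The proof therefore reduces to the bookkeeping above, and the bound $\bo(G) \le k(k-1) = \chi_a(G)(\chi_a(G)-1)$ follows.
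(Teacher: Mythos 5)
Your proof is correct and follows essentially the same route as the paper: the paper's auxiliary graph $G_{i,j}$ is exactly the intersection of the two canonical extensions you build for the forest on colors $i$ and $j$, and your case analysis on non-edges (distinct colors versus same color, handled by borrowing a second color) matches the paper's argument. The only cosmetic difference is that you skip naming the intermediate graphs $G_{i,j}$ and verify the global intersection directly.
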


\begin{proof}
  Consider an acyclic coloring $c$ of $G$ with $k\geq 2$ colors. For any two
  distinct colors $i<j$, we consider the graph $G_{i,j}$ obtained from
  $G$ by adding an edge between every pair of non-adjacent vertices
  $u,v$ such that at most one of $u,v$ is colored $i$ or $j$.

  We first show that $G=\bigcap_{i<j}G_{i,j}$. Since all $G_{i,j}$'s
  are supergraphs of $G$, we only need to show that for every pair
  $u,v$ of non-adjacent vertices in $G$, there exist $i<j$ so that $u$
  and $v$ are not adjacent in $G_{i,j}$. Exchanging $u$ and $v$ if
  necessary, we may assume that $c(u) \leq c(v)$.  If $c(u)< c(v)$
  then $G_{c(u),c(v)}$ does not contain the edge $uv$. If $c(u)=c(v)$
  then $G_{c(u),k}$ (if $c(u) < k$) or $G_{1, c(u)}$ (if $c(u) = k$)
  does not contain the edge $uv$.

  We now prove that for every $i<j$, $\bo(G_{i,j})\le 2$. This implies
  that $\bo(G) \le 2 {k \choose 2} =k(k-1)$. Observe that since $c$ is
  an acyclic coloring of $G$, the subgraph $H_{i,j}$ of $G$ induced by
  the vertices colored $i$ or $j$ is a forest, and thus has boxicity
  at most two (this follows from~\cite{Sch84} but can also be proven
  independently fairly easily). Let $I_{i,j}$ and $J_{i,j}$ be two
  interval graphs on the vertex set $V(H_{i,j})$ such that
  $H_{i,j}=I_{i,j} \cap J_{i,j}$. Then $G_{i,j}$ is precisely the
  intersection of canonical extensions of $I_{i,j}$ and $J_{i,j}$
  to $G$, and thus has boxicity at most two.
\end{proof}

It follows from~\cite{NO03} that if every minor of a graph $G$ has
average degree at most $k$, then $\chi_a(G)\le \tfrac34 k^2+O(k)$. It 
is known that graphs with no $K_t$-minor have average degree $O(t
\sqrt{ \log t})$~\cite{Kos84,Tho84}, so they have acyclic chromatic
number $O(t^2 \log t)$. Lemma~\ref{lem:acy} then implies that their
boxicity is $O(t^4 \log^2 t)$.

Alon, Mohar and Sanders~\cite{AMS96} proved that graphs embeddable in a
surface of genus $g$ have acyclic chromatic number
$O(g^{4/7})$. Using Lemma~\ref{lem:acy}, this implies that such graphs
have boxicity $O(g^{8/7})$. In the next section we show 
that the boxicity is bounded by a linear function of $g$.

\section{Graphs on surfaces}\label{sec:sur}

We will prove that the boxicity of a graph embeddable in a surface of 
genus $g$ is $O(g)$, by induction on $g$. Before we do so, we
need three simple lemmas that will be useful throughout the induction.

\begin{lem}\label{lem:sur1}
  Let $G=(V,E)$ be a graph and let $X \subseteq V$ be such that $G[X]$
  contains $k$ pairwise disjoint pairs of non-adjacent vertices. Then 
  $\bo(G) \le \bo(G \setminus X) + |X| -k$.
\end{lem}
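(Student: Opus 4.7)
The plan is to start from interval graphs witnessing $\bo(G\setminus X)$, extend each canonically to $V(G)$, and then add exactly $|X|-k$ further interval supergraphs of $G$ whose joint intersection with the canonical extensions is precisely $G$. The saving of $k$ compared to the trivial bound $\bo(G\setminus X)+|X|$ will come from the observation that a single interval graph can simultaneously record the exact $G$-neighborhoods of two non-adjacent vertices.

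More precisely, let $d=\bo(G\setminus X)$ and let $I_1,\ldots,I_d$ be interval graphs on $V\setminus X$ whose intersection equals $G\setminus X$; for each $j$, let $I'_j$ be the canonical extension of $I_j$ to $G$ (in the sense defined just before Lemma~\ref{lem:acy}). Each $I'_j$ is a supergraph of $G$, and $\bigcap_{j=1}^d I'_j$ already agrees with $G$ on every pair of vertices both lying in $V\setminus X$; so it remains only to cut away the excess edges incident to $X$ using interval supergraphs of $G$.

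For a single vertex $x\in V$, the graph $J_x$ obtained by mapping $x$ to the point $\{0\}$, every neighbor of $x$ in $G$ to $[0,1]$, and every remaining vertex to $[\tfrac12,1]$ is an interval supergraph of $G$ in which $x$ is adjacent to exactly its $G$-neighbors while all pairs of vertices distinct from $x$ are adjacent. The crucial construction is a single interval graph handling a non-adjacent pair $u,v\in X$ at once: send $u\mapsto\{0\}$, $v\mapsto\{3\}$, and every other vertex $w$ to $[0,2]$, $[1,3]$, $[0,3]$, or $[1,2]$ according to whether $w$ is $G$-adjacent to only $u$, only $v$, both, or neither. Because $uv\notin E(G)$ the points $\{0\}$ and $\{3\}$ can indeed be taken disjoint, and a direct check shows that the resulting interval graph $I_{u,v}$ is a supergraph of $G$ in which both $u$ and $v$ receive exactly their $G$-neighborhoods.

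To conclude, let $(u_i,v_i)_{i=1}^k$ be the $k$ pairwise disjoint non-adjacent pairs in $X$ and set $X_0=X\setminus\{u_1,v_1,\ldots,u_k,v_k\}$, so that $|X_0|=|X|-2k$. Then
\[
G \;=\; \Bigl(\bigcap_{j=1}^d I'_j\Bigr) \,\cap\, \Bigl(\bigcap_{i=1}^k I_{u_i,v_i}\Bigr) \,\cap\, \Bigl(\bigcap_{x\in X_0} J_x\Bigr),
\]
since every non-edge of $G$ appears in at least one factor: a non-edge inside $V\setminus X$ is witnessed by some $I'_j$, whereas a non-edge incident to a vertex $x\in X$ is witnessed by $I_{u_i,v_i}$ when $x\in\{u_i,v_i\}$ and by $J_x$ when $x\in X_0$. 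This uses $d+k+(|X|-2k)=\bo(G\setminus X)+|X|-k$ interval graphs, proving the lemma. I do not anticipate any genuine obstacle; the only delicate step is the explicit construction of $I_{u,v}$, which is precisely where the hypothesis $uv\notin E(G)$ is used.
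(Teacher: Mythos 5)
Your proposal is correct and follows essentially the same route as the paper: canonical extensions of interval representations of $G\setminus X$, one interval supergraph per leftover vertex of $X$, and one interval supergraph per non-adjacent pair that records both vertices' exact neighborhoods (your $I_{u,v}$ with intervals $[0,2],[1,3],[0,3],[1,2]$ is just a rescaled version of the paper's $J_i$ built on $\{0\},\{1\},\{2\}$). The counting $d+k+(|X|-2k)$ and the case analysis of non-edges match the paper's argument exactly.
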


\begin{proof}
  Let $v_{2i-1},v_{2i}$ ($1 \le i \le k$) be $k$ pairwise disjoint
  pairs of non-adjacent vertices in $G[X]$, and let
  $v_{2k+1},\ldots,v_\ell$ be the remaining vertices of $X$. Consider
  $t$ interval graphs $I_1,\ldots,I_t$ on the vertex set $V\setminus
  X$ such that $G\setminus X=\bigcap_{i=1}^t I_i$. We will prove that
  $\bo(G) \le t+ \ell -k$.

  For every pair $v_{2i-1},v_{2i}$ ($1 \le i \le k$), we consider the
  interval graph $J_i$ defined as follows: $v_{2i-1}$ is mapped to
  $\{0\}$; $v_{2i}$ is mapped to $\{2\}$; the common neighbors of
  $v_{2i-1}$ and $v_{2i}$ in $G$ are mapped to $[0,2]$; the remaining
  neighbors of $v_{2i-1}$ are mapped to $[0,1]$; the remaining
  neighbors of $v_{2i}$ are mapped to $[1,2]$; and the remaining
  vertices are mapped to $\{1\}$. The graph $J_{i}$ is clearly a
  supergraph of $G$, and every non-neighbor of $v_{2i-1}$ or $v_{2i}$
  in $G$ is a non-neighbor of $v_{2i-1}$ or $v_{2i}$ (respectively) in
  $J_i$.

  Next, for every $i \in \{2k+1, 2k+2, \dots,\ell\}$, we define an
  interval graph $J_i$ as follows: $v_i$ is mapped to $\{0\}$; its
  neighbors in $G$ are mapped to $[0,1]$, and the remaining vertices
  are mapped to $\{1\}$. This is a supergraph of $G$, and every
  non-edge incident to $v_i$ in $G$ is a non-edge in $J_i$.

Let $I_1',\ldots I_t'$ denote canonical extensions of
$I_1,\ldots,I_t$ to $G$. We claim that $G$ is precisely the
intersection of the $I_i'$'s ($1\le i \le t$), and the $J_i$'s ($i \in
\{1,\ldots,k\} \cup \{2k+1,\ldots, \ell\}$). These graphs are clearly
supergraphs of $G$. Moreover, every non-edge $e$ of $G$ is a non-edge
in one of these graphs, since $e$ is either a non-edge in $G \setminus
X$, or is incident to some vertex $v_i$ with $i\in \{1, \dots,
\ell\}$.
\end{proof}

In all  subsequent applications of Lemma~\ref{lem:sur1}, $X$ will
induce a cycle in $G$. In this case we obtain 
$\bo(G) \le \bo(G\setminus X) + 3$ if $|X|=3$, and
$\bo(G) \le \bo(G\setminus X) + \lceil |X|/2 \rceil$ if $|X| \geq 4$. 

\begin{lem}\label{lem:sur2}
  Let $G=(V,E)$ be a graph and let $V_1$, $V_2$, $X$ be a partition of
  $V$ such that no edge of $G$ has an endpoint in $V_1$ and the other
  in $V_2$. Let $G_1$ be a graph obtained from $G[V_1 \cup X]$ by
  adding a (possibly empty) set of edges between pairs of vertices
  from $X$. Then $\bo(G) \le \bo(G_1)+\bo(G[V_2 \cup X])+1$. In
  particular $\bo(G) \le \bo(G[V_1 \cup X])+\bo(G[V_2 \cup X])+1$.
\end{lem}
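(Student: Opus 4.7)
The plan is to combine interval representations of $G_1$ and $G[V_2\cup X]$ on the whole vertex set via canonical extensions, and then add a single extra interval graph that separates $V_1$ from $V_2$. Let $b_1=\bo(G_1)$ and $b_2=\bo(G[V_2\cup X])$, and fix interval graphs $I_1,\dots,I_{b_1}$ on $V_1\cup X$ with $G_1=\bigcap_i I_i$, and $J_1,\dots,J_{b_2}$ on $V_2\cup X$ with $G[V_2\cup X]=\bigcap_j J_j$. Let $I_i'$ denote the canonical extension of $I_i$ to $G$ (placing every $V_2$-vertex on the ambient interval), and similarly $J_j'$ the canonical extension of $J_j$ (placing every $V_1$-vertex on the ambient interval). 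Each $I_i'$ and $J_j'$ is a supergraph of $G$.

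The extra interval graph $K$ is defined by mapping every vertex of $V_1$ to $[0,1]$, every vertex of $V_2$ to $[2,3]$, and every vertex of $X$ to $[0,3]$. Since there are no edges of $G$ between $V_1$ and $V_2$ by hypothesis, and all other pairs correspond to intersecting intervals, $K$ is a supergraph of $G$ whose only non-edges are precisely the $V_1$-$V_2$ pairs.

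I then claim $G=\bigcap_i I_i'\cap \bigcap_j J_j'\cap K$, by checking that every non-edge of $G$ is a non-edge in one of these $b_1+b_2+1$ graphs. The case analysis goes through the six possible locations of a non-edge $uv$: inside $V_1$ it is a non-edge of $G_1$ (the added edges of $G_1$ lie inside $X$), hence of some $I_i'$; inside $V_2$ it is a non-edge of some $J_j'$; for $u\in V_1,v\in X$ it remains a non-edge of $G_1$ (again the extra edges of $G_1$ are inside $X$), hence of some $I_i'$; for $u\in V_2,v\in X$ it is a non-edge of some $J_j'$; inside $X$ it is a non-edge of $G[V_2\cup X]$, hence of some $J_j'$; finally, a $V_1$-$V_2$ non-edge is a non-edge of $K$.

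The construction is direct and the main thing to watch is the asymmetric role of $G_1$ versus $G[V_2\cup X]$: since $G_1$ is allowed to contain arbitrary extra edges within $X$, one must verify that non-edges of $G$ with at least one endpoint in $V_1$ are still non-edges of $G_1$, which is where the hypothesis that the added edges lie inside $X$ is used. Once this is observed, the bound $\bo(G)\le\bo(G_1)+\bo(G[V_2\cup X])+1$ follows, and the special case $G_1=G[V_1\cup X]$ yields the second inequality immediately.
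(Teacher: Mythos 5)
Your proof is correct and follows essentially the same approach as the paper: canonical extensions of the representations of $G_1$ and $G[V_2\cup X]$, plus one extra interval graph separating $V_1$ from $V_2$ (the paper uses $\{0\}$, $\{1\}$, $[0,1]$ where you use $[0,1]$, $[2,3]$, $[0,3]$, which is immaterial). Your case analysis, in particular routing non-edges within $X$ through the $J_j$'s because $G_1$ may contain extra edges there, matches the intent of the paper's argument.
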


\begin{proof}
  Consider $k$ interval graphs $I_1,\ldots,I_k$ on the vertex set $V_1
  \cup X$ such that $G_1=\bigcap_{i=1}^k I_i$, and $\ell$
  interval graphs $J_1,\ldots,J_\ell$ on the vertex set $V_2 \cup X$
  such that $G[V_2 \cup X]=\bigcap_{i=1}^\ell J_i$. 

  Let $I_1',\ldots,I_k'$ be canonical extensions of
  $I_1,\ldots,I_k$ to $G$, and let $J_1',\ldots,J_\ell'$ be
  canonical extensions of $J_1,\ldots,J_\ell$ to $G$. Finally, let $K$
  be the interval graph defined by mapping all vertices of $V_1$
  to $\{0\}$, all vertices of $V_2$ to $\{1\}$, and all 
  vertices of $X$ to $[0,1]$.

  It is clear that all the $I_i$'s, $J_i$'s and $K$ are supergraphs of
  $G$, and that every non-edge of $G$ appears in one of these
  graphs. Hence, $\bo(G)\le k + \ell+1$.
\end{proof}

The next lemma is a variation of~\cite[Lemma 7]{ACM11}, where the same
idea is used to obtain a slightly different result.

\begin{lem}\label{lem:sur2bis}
  Let $G=(V,E)$ be a graph and let $K \subseteq V$ be a clique in $G$. Let
  $H$ be a graph obtained from $G$ by removing some set of edges having
  their two endpoints in $K$. Then $\bo(G) \le 2 \,\bo(H)$.
\end{lem}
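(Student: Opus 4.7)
The plan is to \emph{double} each interval dimension in a representation of $H$ to build one for $G$. Let $d=\bo(H)$ and fix interval graphs $I_1,\ldots,I_d$ on $V$ with $H=I_1\cap\cdots\cap I_d$, together with an explicit interval representation $v\mapsto[a_v^j,b_v^j]$ for each $I_j$.

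For each $j$ I will construct two interval graphs $I_j'$ and $I_j''$ that differ from $I_j$ only on the intervals of vertices in $K$. Pick a constant $L_j$ so large that every endpoint $a_v^j,b_v^j$ lies in $[-L_j,L_j]$. In $I_j'$, replace the interval of each $v\in K$ by $[a_v^j,L_j]$ (a one-sided extension to the right); in $I_j''$, replace it by $[-L_j,b_v^j]$ (a one-sided extension to the left). Vertices of $V\setminus K$ keep their original intervals in both. Then $I_j'$ and $I_j''$ are interval graphs in which $K$ is a clique (since all $K$-intervals share the point $L_j$ in $I_j'$, and $-L_j$ in $I_j''$), and both are supergraphs of $I_j$.

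The main claim is that $G=\bigcap_{j=1}^{d}(I_j'\cap I_j'')$. Every edge of $G$ either already belongs to $H\subseteq I_j$ (hence to $I_j'$ and $I_j''$) or lies inside $K$ (hence to $I_j'$ and $I_j''$ by the previous observation). For a non-edge $uv$ of $G$ we have $uv\notin H$, so there is some $j$ with $uv\notin I_j$, and at least one of $u,v$ is outside $K$. If both lie in $V\setminus K$ their intervals are unchanged and $uv\notin I_j'$. Otherwise, say $u\in K$ and $v\notin K$. Since $[a_u^j,b_u^j]$ and $[a_v^j,b_v^j]$ are disjoint, either $b_u^j<a_v^j$ (so $v$'s interval lies strictly to the right of $u$'s, the leftward extension of $u$ in $I_j''$ does not reach $v$, and $uv\notin I_j''$), or $b_v^j<a_u^j$ (so the rightward extension of $u$ in $I_j'$ does not reach $v$, and $uv\notin I_j'$). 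Thus $uv$ is a non-edge of at least one of the $2d$ constructed interval graphs.

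This exhibits $G$ as the intersection of $2d=2\,\bo(H)$ interval graphs and gives $\bo(G)\le 2\,\bo(H)$. The only real subtlety is to use \emph{one-sided} rather than two-sided extensions of the $K$-intervals: a symmetric extension to the whole line would create spurious edges between $K$ and $V\setminus K$, whereas the one-sided choice lets the direction of the gap between $u$'s and $v$'s original intervals dictate which of $I_j'$ or $I_j''$ retains the non-edge.
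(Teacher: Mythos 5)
Your proof is correct and is essentially the paper's own argument: the paper likewise doubles each interval graph $I_j$ into two copies, extending the intervals of the clique vertices one-sidedly to the left in one copy and to the right in the other (using the global minimum and maximum endpoints where you use $\pm L_j$), and verifies the intersection identically. The difference is purely cosmetic.
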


\begin{proof}
Consider $k$ interval graphs $I_1\ldots,I_k$ on the vertex set $V$
such that $H=\bigcap_{i=1}^k I_i$. For each $1 \le i\le k$, we define
two interval graphs $I_i^l$ and $I_i^r$ in the following way. Assume
that $I_i$ maps every vertex $v \in V$ to
some interval $[l(v),r(v)]$ of $\RR$. Set $a=\min \{l(v), v\in V$\} and
$b=\max \{r(v), v\in V\}$. The interval graph $I_i^l$ is obtained by
mapping each vertex $v$ to $[a,r(v)]$ if $v\in K$ and to $[l(v),r(v)]$
otherwise. The interval graph $I_i^r$ is obtained by
mapping each vertex $v$ to $[l(v),b]$ if $v\in K$ and to $[l(v),r(v)]$
otherwise. It is readily seen that $G=\bigcap_{i=1}^k (I_i^l\cap
I_i^r)$, which shows that $\bo(G)\le 2k$.
\end{proof}

Combining Lemmas~\ref{lem:sur2} and~\ref{lem:sur2bis}, we
immediately obtain the following corollary.

\begin{cor}\label{cor:cor1}
  Let $G=(V,E)$ be a graph and let $V_1$, $V_2$, $X$ be a partition of
  $V$ such that no edge of $G$ has an endpoint in $V_1$ and the other
  in $V_2$. Let $G_1$ be a graph obtained from $G[V_1 \cup X]$ by
  adding and/or removing any set of edges between pairs of vertices
  of $X$. Then $\bo(G) \le 2\,\bo(G_1)+\bo(G[V_2 \cup X])+1$. 
\end{cor}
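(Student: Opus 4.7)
The plan is to chain Lemma~\ref{lem:sur2} and Lemma~\ref{lem:sur2bis} by routing through an intermediate graph in which $X$ has been made into a clique. This cleanly separates the ``addition'' of edges inside $X$ (handled by Lemma~\ref{lem:sur2} with only an additive cost) from the ``removal'' of edges inside $X$ (handled by Lemma~\ref{lem:sur2bis} at the cost of a factor of two).

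First, I would define $G_1^+$ as the graph obtained from $G[V_1 \cup X]$ by adding all missing edges between pairs of vertices of $X$; equivalently, $G_1^+$ is obtained from $G_1$ by adding the edges of $X$ that $G_1$ lacks, so that $X$ is a clique in $G_1^+$. By construction, $G_1^+$ differs from $G[V_1 \cup X]$ only by added edges with both endpoints in $X$, which is exactly the situation covered by Lemma~\ref{lem:sur2}. Applying that lemma to the partition $V_1, V_2, X$ with $G_1^+$ in the role of the modified graph yields
\[
\bo(G) \leq \bo(G_1^+) + \bo(G[V_2 \cup X]) + 1.
\]

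Next, I would apply Lemma~\ref{lem:sur2bis} to $G_1^+$ with clique $K = X$. Since $G_1$ is obtained from $G_1^+$ by deleting some (possibly all) of the edges added inside the clique $X$, the lemma gives $\bo(G_1^+) \leq 2\,\bo(G_1)$. Substituting this into the previous inequality produces the claimed bound $\bo(G) \leq 2\,\bo(G_1) + \bo(G[V_2 \cup X]) + 1$.

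There is no real obstacle here: the only thing to verify is that the intermediate graph $G_1^+$ simultaneously satisfies the hypothesis of Lemma~\ref{lem:sur2} (pure addition of edges inside $X$) and stands in the relation required by Lemma~\ref{lem:sur2bis} to $G_1$ (pure deletion inside a clique). Both are immediate from the definition, which is why the corollary follows at once from the two preceding lemmas.
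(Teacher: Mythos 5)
Your proof is correct and is precisely the argument the paper intends (the paper states the corollary follows "immediately" by combining Lemmas~\ref{lem:sur2} and~\ref{lem:sur2bis} without writing out the details): you pass through the intermediate graph $G_1^+$ in which $X$ is completed to a clique, apply Lemma~\ref{lem:sur2} to it, and then Lemma~\ref{lem:sur2bis} with $K=X$ to relate $\bo(G_1^+)$ to $2\,\bo(G_1)$. Nothing is missing.
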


We now turn to the boxicity of graphs on surfaces.  In this paper, a
{\em surface} is a non-null compact connected 2-manifold without
boundary.  We refer the reader to the book by Mohar and
Thomassen~\cite{MoTh} for background on graphs on surfaces.

Consider a graph $G$ embedded in a surface $\Sigma$.  For simplicity,
we use $G$ both for the corresponding abstract graph and for the
subset of $\Sigma$ corresponding to the drawing of $G$.  A cycle $C$
of $G$ is said to be {\em noncontractible} if $C$ is noncontractible
(as a closed curve) in $\Sigma$. Also, $C$ is called {\em surface
  separating} if $C$ separates $\Sigma$ in two connected pieces.  The
{\em facewidth} $\fw(G)$ of $G$ is the least integer $k$ such that $\Sigma$
contains a noncontractible simple closed curve intersecting $G$ in
$k$ points.  If $G$ triangulates $\Sigma$ then its facewidth is equal
to the length of a shortest noncontractible cycle in $G$.  Two cycles
of $G$ are {\em (freely) homotopic} in $\Sigma$ if there is a
continuous deformation mapping one to the other.

The following well-known fact (often called the \emph{3-Path
  Property}) will be used: If $P_{1}, P_{2},P_{3}$ are three
internally disjoint paths with the same endpoints in an embedded
graph, and $P_{1}, P_{2}$ are such that $P_{1} \cup P_{2}$ is a
noncontractible cycle, then at least one of the two cycles $P_{1} \cup
P_{3}$, $P_{2} \cup P_{3}$ is also noncontractible (see for
instance~\cite[Proposition 4.3.1]{MoTh}). This implies the following
lemma.

\begin{lem}\label{lem:sur3}
  Suppose that $C$ is a noncontractible cycle of a graph $G$ embedded
  in a surface. Then there exists a noncontractible induced cycle
  $C'$ of $G$ with $V(C') \subseteq V(C)$.
\end{lem}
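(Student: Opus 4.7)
The plan is to pick $C'$ to be a shortest cycle of $G[V(C)]$ that is noncontractible in $\Sigma$, and then use the 3-Path Property to argue that $C'$ must be induced.

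First, I would note that such a $C'$ exists: the cycle $C$ itself lies in $G[V(C)]$ and is noncontractible, so the set of noncontractible cycles of $G[V(C)]$ is nonempty, and we may select one of minimum length. Clearly $V(C')\subseteq V(C)$ by construction, so it only remains to show $C'$ is induced in $G$, which (since $V(C')\subseteq V(C)$) is equivalent to saying $C'$ has no chord in $G[V(C)]$.

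Next, I would argue by contradiction: suppose $C'$ has a chord $uv$ in $G[V(C)]$. The chord splits $C'$ into two internally disjoint $u$--$v$ paths $P_1$ and $P_2$ with $P_1\cup P_2 = C'$, and each $P_i$ has length at least $2$. Let $P_3$ be the single-edge path consisting of the chord $uv$; then $P_1, P_2, P_3$ are three internally disjoint $u$--$v$ paths in $G[V(C)]$, and $P_1\cup P_2$ is noncontractible. By the 3-Path Property, at least one of $P_1\cup P_3$ or $P_2\cup P_3$ is a noncontractible cycle. Both are cycles in $G[V(C)]$, and both are strictly shorter than $C'$ since each replaces $P_{3-i}$ (of length $\ge 2$) by $P_3$ (of length $1$). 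This contradicts the minimality of $C'$, so no chord exists and $C'$ is induced in $G$, completing the proof.

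The main (and only) obstacle is recognising that the 3-Path Property applies cleanly to a chord situation with $P_3$ taken as the chord itself; once this is done, the minimum-length argument is routine. No facewidth considerations or surface case analysis are needed.
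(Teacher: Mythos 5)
Your proof is correct and matches the paper's intended argument: the paper gives no explicit proof, simply asserting that the 3-Path Property implies the lemma, and your chord-plus-minimality argument (taking $P_3$ to be the chord and shortening a minimum-length noncontractible cycle in $G[V(C)]$) is exactly the standard way to fill in that implication.
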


The next lemma is a standard fact about noncontractible cycles
in embedded graphs, see~\cite[Chapter 4.2]{MoTh}. 

\begin{lem}\label{lem:sur4}
  Suppose that $C$ is a noncontractible cycle of a graph $G$ embedded
  in a surface of genus $g \geq 1$. Then each
  component of $G \setminus V(C)$ is embeddable in a
  surface of genus $g-1$.
\end{lem}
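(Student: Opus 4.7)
The plan is to cut the surface $\Sigma$ along the noncontractible cycle $C$ and analyze the resulting pieces. Since $C$ is a simple closed curve embedded in $\Sigma$, it admits an annular regular neighborhood $N$ (in the orientable setting; a M\"obius-band neighborhood otherwise). First I would form the surface-with-boundary $\Sigma'$ by removing the interior of $N$ from $\Sigma$, and then cap off each boundary circle of $\Sigma'$ with a disk to obtain a closed surface $\widehat{\Sigma}$ (which may be disconnected). The key observation is that every connected component of $G \setminus V(C)$ lies in $\Sigma \setminus C$, hence sits in the interior of some component of $\Sigma'$ and is disjoint from the added capping disks. Consequently it embeds into the corresponding component of $\widehat{\Sigma}$, so it suffices to bound the genus of each such component by $g-1$.

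Next I would split into cases according to whether $C$ is separating. If $C$ is non-separating (and two-sided), then $\Sigma'$ is connected with two boundary circles, and a routine Euler characteristic computation gives $\chi(\widehat{\Sigma}) = \chi(\Sigma) + 2$, so $\widehat{\Sigma}$ is a single closed surface of genus $g-1$. If $C$ is separating, then $\Sigma'$ splits into two components, each with a single boundary circle arising from $C$, and capping off yields two closed surfaces of genera $g_1, g_2$ with $g_1 + g_2 = g$. In either case the capping disks lie in $\Sigma \setminus V(C)$ only up to choice of neighborhood, which is why the components of $G \setminus V(C)$ avoid them automatically.

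The hard part will be the separating case: I need to rule out one side being a disk, since $g_i = 0$ together with $g_1 + g_2 = g$ would only give $g_{3-i} = g$, not the desired $g - 1$. This is exactly where noncontractibility of $C$ is used: if some component of $\Sigma'$ were a disk, then $C$ would bound that disk in $\Sigma$ and hence be contractible, a contradiction. So both $g_1, g_2 \geq 1$, giving $g_i \leq g - 1$ as required. For non-orientable surfaces with $C$ one-sided, the same strategy works using Euler genus: cutting removes a M\"obius band and capping off its single boundary with a disk increases $\chi$ by $1$, so the resulting closed surface has Euler genus $g-1$.
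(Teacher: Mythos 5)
The paper does not actually prove this lemma: it is invoked as a standard fact with a pointer to Mohar and Thomassen's book (Chapter 4.2), so you are supplying an argument where the authors defer to a reference. Your argument is the standard one -- cut along $C$, cap the resulting boundary circles with disks, and track Euler characteristic -- and it is correct in substance. The observation that a component of $G\setminus V(C)$ is a connected subset of $\Sigma\setminus C$ (edges of an embedded graph meet $C$ only at shared vertices) and hence lands in the interior of one piece of the cut surface, away from the capping disks, is exactly the right reduction. The orientable case is complete: for a non-separating $C$ the capped surface has $\chi(\Sigma)+2$, hence genus $g-1$; for a separating $C$ you correctly identify that the only thing to rule out is a genus-$0$ side, and that this is precisely where noncontractibility enters (a side that is a disk would exhibit $C$ as contractible). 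This is the easy direction of the disk/contractibility equivalence, so no heavy machinery is hidden there.

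The one place your case analysis is genuinely incomplete is the non-orientable setting. You write as if ``non-orientable ambient surface'' were synonymous with ``$C$ is one-sided,'' but a simple closed curve in a non-orientable surface may perfectly well be two-sided, and then either separating or non-separating; the Klein bottle already exhibits all three types. These missing cases are handled by the same computations you already performed (two boundary circles and $\chi$ increasing by $2$, or two capped pieces with Euler genera summing to the original and each positive), so nothing new is needed, but they must be stated. Relatedly, since the paper's genus convention is orientable genus for orientable surfaces and crosscap number otherwise, you should run the whole bookkeeping in terms of Euler genus $2-\chi$: cutting and capping can turn a non-orientable surface into an orientable one (the projective plane cut along a one-sided curve caps to a sphere), and ``Euler genus decreases by at least $1$'' is the clean invariant statement that covers every case and suffices for the induction in Theorem~\ref{th:sur}. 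With those two repairs your proof is a complete and self-contained substitute for the citation.
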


Recall that Thomassen~\cite{Tho86} proved that $\bo(G)\le 3$ for every
planar graph $G$.  We are now ready to state and prove the main result
of this note, extending Thomassen's bound to general surfaces. 

\begin{thm}\label{th:sur}
  Let $G$ be a graph embedded in a surface $\Sigma$ of genus
  $g$. Then $\bo(G)\le 5g+3$. 
\end{thm}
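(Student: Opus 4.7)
The plan is to prove Theorem~\ref{th:sur} by induction on $g$. The base case $g=0$ is Thomassen's bound $\bo(G)\leq 3$, which matches $5\cdot 0+3$. For the inductive step, assume $g\geq 1$ and that the theorem holds for every graph embedded in a surface of genus strictly less than $g$.

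The core of the step would be to locate a noncontractible cycle $C$ in $G$ of length at most $10$ and peel off $V(C)$. By Lemma~\ref{lem:sur3} I may replace $C$ by a noncontractible \emph{induced} cycle $C'$ with $V(C')\subseteq V(C)$. Since $C'$ is induced, it contains $\lfloor |V(C')|/2\rfloor$ pairwise disjoint pairs of non-adjacent vertices whenever $|V(C')|\geq 4$ (pair up $\{v_1,v_3\},\{v_2,v_5\},\dots$), so Lemma~\ref{lem:sur1}, applied exactly as in the remark following it, gives
\[
\bo(G)\;\leq\;\bo(G\setminus V(C'))+\max\bigl(3,\ \lceil |V(C')|/2\rceil\bigr)\;\leq\;\bo(G\setminus V(C'))+5
\]
as long as $|V(C')|\leq 10$. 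Lemma~\ref{lem:sur4} then tells us that every component of $G\setminus V(C')$ embeds in a surface of genus at most $g-1$, and so by the induction hypothesis has boxicity at most $5(g-1)+3=5g-2$. The boxicity of a disjoint union equals the maximum component boxicity (align the components in disjoint intervals along one coordinate), so $\bo(G\setminus V(C'))\leq 5g-2$, which when combined with the displayed inequality yields the target bound $\bo(G)\leq 5g+3$.

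The principal difficulty is guaranteeing such a short noncontractible cycle. When the facewidth $\fw(G)$ is at most $10$ the cycle exists by definition and the argument concludes. The hard case is $\fw(G)\geq 11$: then no short noncontractible cycle is available, and Lemma~\ref{lem:sur1} applied to the shortest one costs more than $5$ per unit of genus, which breaks the induction; similarly, a direct split of $G$ across a separating noncontractible cycle using Lemma~\ref{lem:sur2} (or Corollary~\ref{cor:cor1}) gives bounds of the form $5g+7$ or worse, so those lemmas cannot be used naively either. To handle this regime I would pursue one of two routes: either exploit the annular neighborhood of a shortest noncontractible cycle that exists in a large-facewidth embedding to produce a separating structure well adapted to Corollary~\ref{cor:cor1}, or modify $G$ so as to reduce its facewidth (for example by contracting an edge lying on a noncontractible cycle, a move that cannot increase $\bo$) while tracking its effect carefully. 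Squeezing the total bookkeeping in this large-facewidth case below $5g+3$ is the main technical challenge I expect.
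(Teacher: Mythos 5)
Your induction setup and base case are fine, and your treatment of the small-facewidth regime is essentially the paper's (the paper only needs $\fw(G)\le 5$ there, which after reducing to a triangulation and invoking Lemma~\ref{lem:sur3} gives a cost of $+3$ via Lemma~\ref{lem:sur1}; note that to convert ``small facewidth'' into ``short noncontractible cycle of $G$'' you do need to first reduce to the case where $G$ triangulates $\Sigma$, which you omit). But the large-facewidth case is precisely the content of the theorem, and you leave it unresolved; neither of your two suggested escape routes works as stated. In particular, the claim that contracting an edge cannot increase boxicity is false: boxicity is monotone under taking induced subgraphs but emphatically not under contraction (the graph obtained from $K_n$ by subdividing every edge once has boxicity $\Theta(\log\log n)$, yet contracts to $K_n$, which has boxicity $1$), so that route collapses. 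The annular-neighborhood idea is closer to what actually happens, but the hard work is making the ``separating structure'' cheap enough.

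The paper's resolution, which is the real idea of the proof, is the following. Take a shortest noncontractible cycle $C$ (induced, of length $k\ge 6$), and let $V'$ be the set of vertices with a neighbor on $C$. Because $C$ is shortest and $k\ge 6$, the 3-Path Property forces every $v\in V'$ to attach to $C$ in one of only four ways (one neighbor; two consecutive neighbors; two neighbors at distance two; three consecutive neighbors). One then writes down an explicit collection of $2$-dimensional boxes realizing a graph $H$ on $V(C)\cup V'$ that induces exactly the cycle $C$ on $V(C)$ and reproduces exactly the $G$-adjacencies between $V'$ and $V(C)$, differing from $G[V'\cup V(C)]$ only in edges inside $V'$. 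Thus $\bo(H)\le 2$, and Corollary~\ref{cor:cor1} (the combination of the splitting lemma with the clique-edge-deletion trick of Lemma~\ref{lem:sur2bis}) yields $\bo(G)\le 2\,\bo(H)+\bo(G\setminus V(C))+1\le \bo(G\setminus V(C))+5$, after which Lemma~\ref{lem:sur4} and induction finish the argument. Without this construction (or a substitute of equal strength) your write-up does not prove the theorem.
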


\begin{proof}
  We prove the result by induction on $g$. If $g=0$ the bound follows
  from~\cite{Tho86}, so we can assume that $g\ge 1$. We can also
  assume that $G$ triangulates $\Sigma$, since $G$ is an induced
  subgraph of a triangulation\footnote{We can first assume that $G$ is connected and
    has no embedding on a surface with lower genus, and then
    using~\cite[Proposition 3.4.1]{MoTh} that $G$ has an embedding in
    $\Sigma$ in which every face is homeomorphic to an open disk. In
    this embedding, for every face $f$ consider a boundary walk $e_1,\ldots,e_k$ 
	around $f$, add a triangulated polygon on $k$ vertices $v_1,\ldots,v_k$ inside $f$, 
	and join each $v_i$ with the endpoints of $e_i$.} of $\Sigma$ and
  the boxicity is monotone by taking induced subgraphs.

  First suppose that $\fw(G) \le 5$. Since $G$ is a triangulation,
  there exists a noncontractible cycle $C$ of length at most 5. Using
  Lemma~\ref{lem:sur3}, we can further assume that $C$ is an induced
  cycle of $G$. The boxicity of a graph is clearly the maximum
  boxicity of its components. Thus by Lemma~\ref{lem:sur4} we obtain by
  induction that $\bo(G\setminus V(C))\le 5(g-1)+3$, 
  and by Lemma~\ref{lem:sur1} that $\bo(G)\le 5(g-1)+3 + 3 \leq 5g + 3$.

\smallskip

  From now on we assume that $\fw(G) \ge 6$, and we consider a
  shortest noncontractible cycle $C$ in $G$. It follows from
  Lemma~\ref{lem:sur3} that $C$ is an induced cycle (otherwise, we
  could shorten it). Let $V'$ be the set of vertices from
  $V(G)\setminus V(C)$ having at least one neighbor in $C$. We will
  construct a graph $H$ with $\bo(H)\le 2$ such that $H$ can be
  obtained from $G[V' \cup V(C)]$ by only adding and removing edges having
  both endpoints in $V'$. By
  Corollary~\ref{cor:cor1}, we then have $\bo(G) \le 2\,\bo(H)+\bo(G\setminus
  V(C))+1\le \bo(G\setminus V(C))+5$.  This in turn implies the
  theorem since Lemma~\ref{lem:sur4} and the induction
  hypothesis give that $\bo(G\setminus V(C))\le 5(g-1) + 3$, implying
  $\bo(G)\le 5g + 3$.

\medskip

  We remark that every vertex from $V'$ has at most three neighbors
  in $C$. More precisely, if some vertex of $V'$ does not belong to
  one of these four disjoint sets:
\begin{itemize}
\item$S_1$: the set of vertices of $V'$ with exactly one neighbor in $C$;
\item$S_2$: the set of vertices of $V'$ with exactly two neighbors in $C$
  and such that these vertices are consecutive in $C$;
\item$S_3$: the set of vertices of $V'$ with exactly two neighbors in $C$
  and such that these vertices are at distance two in $C$;
\item$S_4$: the set of vertices of $V'$ with exactly three neighbors in $C$
  and such that these vertices are consecutive in $C$;
\end{itemize}
then, since $C$ has length at least 6, the 3-Path Property implies that
$G$ contains a noncontractible cycle that is shorter than $C$, which is a
contradiction. 

\smallskip

Enumerate the vertices of $C$ as $v_1,\ldots,v_k$, in order. Let $H$
be the intersection graph of the 2-dimensional boxes depicted in
Figure~\ref{fig:H}. Vertices $v_1$, $v_{k-1}$, and $v_k$ are mapped to
boxes with corners $(-1,0)$ and $(0,3)$, $(k-3,0)$ and $(k-2,3)$,
$(0,3)$ and $(k-3,4)$, respectively. Every other vertex $v_i$ is
mapped to the box with corners $(i-2,i\mod 2)$ and $(i-1,1+i\mod
2)$. Consider now the vertices of $V'$: 
Each vertex $v\in S_1$ is mapped to an arbitrary point in the interior of the box 
of $v_i$, where $v_{i}$ is the unique neighbor of $v$ in $C$. 
Each vertex $v\in S_2$ is mapped to the point 
$(0, 3)$ if $v$ is adjacent to $v_{1}$ and $v_{k}$ in $C$;
to the point $(k-3, 3)$ if $v$ is adjacent to $v_{k-1}$ and $v_{k}$ in $C$, and 
to the point $(i-1, 1)$ if $v$ is adjacent to $v_{i}$ and $v_{i+1}$ in $C$ for some 
$i \in \{1, \dots, k-2\}$. 

\smallskip

Consider now a vertex $v\in S_3$ with neighbors $v_i$ and $v_{i+2}$ in
$C$, where $i\in \{1, \dots, k-3\}$. Then $v$ is mapped to the
horizontal segment with endpoints $(i-1,\tfrac12 +i\mod 2)$ and
$(i,\tfrac12 +i\mod 2)$. Vertices of $S_3$ that are adjacent to $v_2$
and $v_k$ in $C$ are mapped to the vertical segment with endpoints
$(\tfrac12,1)$ and $(\tfrac12,3)$.  Vertices of $S_3$ that are
adjacent to $v_{k-2}$ and $v_k$ in $C$ are similarly mapped to the
vertical segment with endpoints $(k-\tfrac72,1 + k\mod 2)$ and
$(k-\tfrac72,3)$.  Vertices of $S_{3}$ that are adjacent to $v_1$ and
$v_{k-1}$ in $C$ are mapped to the horizontal segment with endpoints
$(0,\tfrac52)$ and $(k-3,\tfrac52)$.

\smallskip

Each vertex $v\in S_4$ that is adjacent to $v_i,v_{i+1},v_{i+2}$ in $C$ 
where $i\in \{1, \dots, k-3\}$ is mapped to the horizontal segment with
endpoints $(i-1,1)$ and $(i,1)$. 
Each vertex $v\in S_4$ that is adjacent to $v_1,v_{2},v_{k}$ in $C$ 
is mapped to the vertical segment with endpoints $(0,1)$ and $(0,3)$. 
Each vertex $v\in S_4$ that is adjacent to $v_{k-2},v_{k-1},v_{k}$ in $C$ 
is mapped to the vertical segment with endpoints $(k-3,1)$ and $(k-3,3)$. 
Finally, each vertex $v\in S_4$ that is adjacent to $v_{1},v_{k-1},v_{k}$ in $C$ 
is mapped to the horizontal segment with endpoints $(0,3)$ and $(k-3,3)$.

\begin{figure}[htbp]
\centering
\includegraphics[scale=0.8]{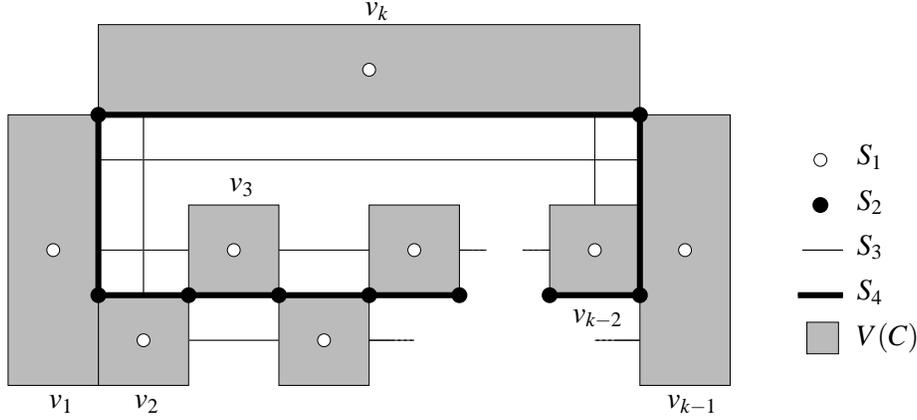}
\caption{The graph $H$ as intersection of 2-dimensional boxes.} \label{fig:H}
\end{figure}

It follows from the definition of $H$ that the vertices of $V(C)$
induce the cycle $C$ in $H$ and that the neighbors on $V(C)$ of any vertex
$v \in V'$ are the same in $G$ and in $H$. This shows that $H$ can be
obtained from $G[V' \cup V(C)]$ by only adding and removing edges
having both endpoints in $V'$, which concludes the proof.
\end{proof}

Theorem~\ref{th:sur} implies that toroidal graphs have boxicity at
most 8. We improve on this bound by using the following remarkable
result of Schrijver~\cite{S93}: Every graph embedded in the torus with
facewidth $k$ contains $\lfloor 3k/4 \rfloor$ vertex-disjoint
noncontractible cycles.  (Note that on the torus, such cycles are
necessarily homotopic.)

\begin{thm}\label{th:torus}
  $\bo(G)\le 7$ for every toroidal graph $G$.
\end{thm}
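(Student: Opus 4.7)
The plan is to refine the argument of Theorem~\ref{th:sur} applied to the torus (which only gives $\bo(G)\le 8$) by using Schrijver's theorem to extract two well-placed disjoint noncontractible cycles, instead of working with a single shortest one. As in the proof of Theorem~\ref{th:sur}, I would first reduce to the case where $G$ is a simple triangulation of the torus. Since $G$ is simple, every cycle of $G$ has length at least $3$, and for a triangulation the facewidth equals the length of a shortest noncontractible cycle, so $\fw(G)\ge 3$.

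Applying Schrijver's theorem then yields $\lfloor 3\fw(G)/4\rfloor\ge 2$ vertex-disjoint noncontractible cycles; let $C_1$ and $C_2$ be two of them. As noted in the excerpt, on the torus such cycles are necessarily freely homotopic, and hence their union separates the torus into two closed annuli $A_1$ and $A_2$ whose common boundary is $C_1\cup C_2$. Let $V_i$ be the set of vertices of $G$ lying in the interior of $A_i$, and set $X=V(C_1)\cup V(C_2)$, so that $V_1,V_2,X$ partition $V(G)$. Since any edge with one end in $V_1$ and the other in $V_2$ would have to cross $C_1\cup C_2$ away from its endpoints (impossible in an embedding), no edge of $G$ has one end in $V_1$ and the other in $V_2$.

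For each $i\in\{1,2\}$ the subgraph $G[V_i\cup X]$ embeds in the closed annulus $A_i$, which is a planar region, so $G[V_i\cup X]$ is planar and hence $\bo(G[V_i\cup X])\le 3$ by Thomassen's theorem~\cite{Tho86}. The ``in particular'' form of Lemma~\ref{lem:sur2} applied to the partition $(V_1,V_2,X)$ then gives
\[
\bo(G)\le \bo(G[V_1\cup X]) + \bo(G[V_2\cup X]) + 1 \le 3+3+1 = 7.
\]
The only genuinely topological ingredient is the standard fact that two disjoint noncontractible cycles on the torus are freely homotopic and together bound two annuli; apart from checking this and the absence of edges between $V_1$ and $V_2$, the argument reduces cleanly to a single application of Lemma~\ref{lem:sur2} together with the planar boxicity bound, so I anticipate no serious obstacle.
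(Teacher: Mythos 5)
Your reduction to Lemma~\ref{lem:sur2} has a genuine gap at the step where you claim that $G[V_i\cup X]$ embeds in the closed annulus $A_i$ and is therefore planar. The induced subgraph $G[V_i\cup X]$ contains \emph{every} edge of $G$ with both endpoints in $V_i\cup X$, and in particular every edge with both endpoints in $X=V(C_1)\cup V(C_2)$: chords of $C_1$ or of $C_2$, and edges joining a vertex of $C_1$ to a vertex of $C_2$. Such an edge may be drawn inside the \emph{other} annulus $A_{3-i}$, so it belongs to $G[V_i\cup X]$ without being drawn in $A_i$; only the subgraph of $G$ actually drawn in $A_i$ is guaranteed planar, and that is in general a proper subgraph of $G[V_i\cup X]$. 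Nothing in your setup rules out, say, two disjoint noncontractible triangles $C_1,C_2$ with all nine edges between them present, some drawn in $A_1$ and the rest in $A_2$; then $G[X]$ contains $K_{3,3}$ and both $G[V_1\cup X]$ and $G[V_2\cup X]$ are non-planar. Since with facewidth as low as $3$ Schrijver's theorem only hands you two cycles, which may be adjacent, you have no control over which edges of $G[X]$ live in which annulus.

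This is exactly the difficulty the paper's proof is built around. It first disposes of the case $\fw(G)\le 5$ separately, using Lemma~\ref{lem:sur1} on a short induced noncontractible cycle to get $\bo(G)\le 3+3=6$. In the remaining case $\fw(G)\ge 6$, Schrijver's theorem yields \emph{four} pairwise disjoint noncontractible cycles $C_1,C_2,C_3,C_4$, and one sets $X=V(C_1)\cup V(C_3)$: the cycles $C_2$ and $C_4$ act as buffers forcing the absence of any edge between $V(C_1)$ and $V(C_3)$, and Lemma~\ref{lem:sur3} allows one to take $C_1$ and $C_3$ induced, so that $G[X]$ is precisely the disjoint union of two cycles and every edge of $G[V_i\cup X]$ really is drawn in the closed piece bounded by $C_1$ and $C_3$ containing $V_i$. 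Only then does Lemma~\ref{lem:sur2} give $3+3+1=7$. If you wanted to keep a two-cycle decomposition you would have to repair the edges inside $X$ via Corollary~\ref{cor:cor1}, but that costs a factor $2$ on one side and can only be applied to one side, so it does not recover the bound $7$.
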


\begin{proof}
  Again, we may assume that $G$ triangulates the torus.

Assume first that $\fw(G) \le 5$. Since $G$ is a triangulation, there
exists a noncontractible cycle $C$ of length at most 5 such that
$G \setminus V(C)$ is planar. Using Lemma~\ref{lem:sur3}, we can
further assume that $C$ is an induced cycle of $G$. Then, using
Lemma~\ref{lem:sur1} and the result of Thomassen about the
boxicity of planar graphs, we deduce that $\bo(G)\le
3+3=6$. 

Assume now that $\fw(G) \ge 6$.  The aforementioned result of
Schrijver implies that $G$ contains 4 pairwise vertex-disjoint
noncontractible cycles, say $C_1,C_2,C_3,C_4$ in this order. Because
of $C_2$ and $C_4$ there are no edges between $C_1$ and $C_3$ in
$G$. We may further assume by Lemma~\ref{lem:sur3} that $C_1$ and
$C_3$ are induced cycles in $G$.  (Observe that every noncontractible
cycle in $G[V(C_{1})]$ or in $G[V(C_{3})]$ is again homotopic to the
four cycles $C_1,C_2,C_3,C_4$, because such a cycle is vertex-disjoint
from $C_{2}$ and $C_{4}$.)  The removal of $C_1$ and $C_3$ cuts the
torus into two connected pieces $\Sigma_1$ and $\Sigma_2$. Let $V_i$
($i=1,2$) be the set of vertices lying on $\Sigma_i$, and set
$X=V(C_1) \cup V(C_3)$. Since $G[V_1 \cup X]$ and $G[V_2 \cup X]$ are
planar, it follows from Lemma~\ref{lem:sur2} that $\bo(G)\le 3+3+1=7$.
\end{proof}

It was proved in~\cite{Rob69} that for every $n\ge 1$, the graph $G_{2n}$
obtained from $K_{2n}$ by removing a perfect matching has boxicity
exactly $n$. Since $G_8$ can be embedded on the torus (see
Figure~\ref{fig:k8}), there exist toroidal graphs with boxicity four.

\begin{figure}[htbp]
\centering
\includegraphics[scale=0.6]{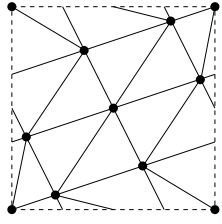}
\caption{A toroidal embedding of the graph obtained from $K_8$ by
  removing a perfect matching (the four corners correspond to the same
  vertex).} \label{fig:k8}
\end{figure}

Recall that, for a graph $G=(V,E)$, the adjacency poset $\calP_{G}$ of $G$ 
is defined as the poset $\calP_{G}=(W, \leq)$ with $W=V \cup V'$, where
$V'$ is a disjoint copy of $V$, and $u \leq v$ if and only if $u=v$,
or $u\in V$ and $v\in V'$ and $u,v$ correspond to two distinct
vertices of $G$ which are adjacent in $G$.  Let $\calP^{*}_{G}$ denote
the poset obtained from $\calP_{G}$ by adding that $u \leq v$ for
every $(u,v)\in V \times V'$ such that $u$ and $v$ correspond to the
same vertex of $G$.  Adiga, Bhowmick, and Chandran~\cite{ABC10}
recently proved that $\dim(\calP^{*}_{G})/2 - 2 \leq \bo(G) \leq
2\dim(\calP^{*}_{G})$ for every graph $G$.  Using this result, we may
bound the dimension of $\calP_{G}$ as follows.

\begin{thm}
\label{th:dim_box} 
$\dim(\calP_{G})\le 2\, \bo(G)+\chi(G)+4$ for every graph $G=(V,E)$. 
\end{thm}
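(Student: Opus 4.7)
The plan is to reduce to the Adiga--Bhowmick--Chandran bound $\dim(\calP^*_G) \leq 2\,\bo(G)+4$, which is the upper bound half of the inequality $\dim(\calP^*_G)/2-2 \leq \bo(G)$ quoted above. The posets $\calP_G$ and $\calP^*_G$ differ only in that $\calP^*_G$ adds the comparabilities $u \leq u'$, where $u'\in V'$ denotes the copy of $u\in V$. So the goal becomes the purely combinatorial statement $\dim(\calP_G) \leq \dim(\calP^*_G) + \chi(G)$: one needs to convert a realizer of $\calP^*_G$ into a realizer of $\calP_G$ by supplying a few additional linear extensions that break exactly the spurious relations $u \leq u'$.

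Concretely, let $t=\dim(\calP^*_G)$ and fix linear extensions $L_1,\ldots,L_t$ of $\calP^*_G$ whose intersection is $\calP^*_G$; since $\calP_G$ is a subrelation of $\calP^*_G$, each $L_i$ is also a linear extension of $\calP_G$. Now fix a proper coloring of $G$ with color classes $C_1,\ldots,C_{\chi(G)}$. For each $j$, I plan to build a linear extension $M_j$ of $\calP_G$ satisfying $u' < u$ for every $u\in C_j$. The natural candidate is the concatenation, in order, of the four blocks
\[
V\setminus C_j,\quad \{u' : u\in C_j\},\quad C_j,\quad \{v' : v\in V\setminus C_j\},
\]
with arbitrary internal orders. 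By construction, $u'$ precedes $u$ whenever $u\in C_j$, so $M_{c(u)}$ breaks the pair $(u,u')$ for each $u\in V$, where $c$ is the coloring.

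The only step that really needs checking is that each $M_j$ is a linear extension of $\calP_G$. The nontrivial relations of $\calP_G$ all have the form $a < b'$ for some edge $ab\in E$. If $a\notin C_j$, then $a$ sits in block 1 while $b'$ lies in block 2 or 4, both of which come later. If $a\in C_j$, then $b\notin C_j$ because $C_j$ is independent, so $a$ is in block 3 and $b'$ is in block 4. Either way $a$ precedes $b'$ in $M_j$, as required; the use of a \emph{proper} coloring is precisely what prevents edge constraints from forcing a violation inside a color block. Combining everything, the intersection $L_1\cap\cdots\cap L_t\cap M_1\cap\cdots\cap M_{\chi(G)}$ equals $\calP_G$: any non-relation of $\calP^*_G$ is killed by some $L_i$, and the remaining pairs $(u,u')$ present in $\calP^*_G$ but not in $\calP_G$ are killed by $M_{c(u)}$. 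This yields $\dim(\calP_G)\leq t+\chi(G)\leq 2\,\bo(G)+\chi(G)+4$, as desired. The main obstacle, such as it is, is exactly the verification in this paragraph; the rest is bookkeeping.
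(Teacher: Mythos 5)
Your proposal is correct and follows essentially the same route as the paper: reduce to the Adiga--Bhowmick--Chandran bound on $\dim(\calP^*_G)$ and, for each color class, add one linear extension with exactly the four-block structure the paper uses to reverse the pairs $(u,u')$. The only difference is that you spell out the verification that each $M_j$ extends $\calP_G$, which the paper leaves as "easily checked."
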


\begin{proof}
  We have that $\dim(\calP^{*}_{G})\le 2\, \bo(G)+4$ by the
  aforementioned result of Adiga {\em et al.}~\cite{ABC10}, thus it is
  enough to show that $\dim(\calP_{G}) \leq \dim(\calP^{*}_{G}) +
  \chi(G)$. Consider a (proper) coloring $V_{1}, V_{2}, \dots, V_{k}$
  of $G$ with $k=\chi(G)$ colors, and let $V'_{1}, V'_{2}, \dots,
  V'_{k}$ denote the corresponding partition of $V'$.  For
  $i\in\{1,\dots, k\}$, let $\mathcal{L}_{i}=(W, \leq_{i})$ be an
  arbitrary linear order satisfying that $$V_{1} \cup \cdots \cup
  V_{i-1} \cup V_{i+1} \cup \cdots \cup V_{k} \leq_{i} V'_{i} \leq_{i}
  V_{i} \leq_{i} V'_{1} \cup \cdots \cup V'_{i-1} \cup V'_{i+1} \cup
  \cdots \cup V'_{k}.$$  (Here $A \leq_{i} B$ means that $u \leq_{i} v$
  for every $u\in A$ and $v \in B$.)  Then it is easily
  checked that each $\mathcal{L}_{i}$ is a linear extension of
  $\calP_{G}$, and that the intersection of these $k$ linear orders
  with $\calP^{*}_{G}$ is exactly $\calP_{G}$. It follows that
  $\dim(\calP_{G}) \leq \dim(\calP^{*}_{G}) + k$, as desired.
\end{proof}

\begin{cor}
  Let $G$ be a graph embeddable in a surface $\Sigma$ of genus
  $g \geq 1$. Then $\dim(\calP_{G}) \leq 10g + \tfrac12\,(27 + \sqrt{1 + 48g})$ if $\Sigma$ is
  orientable,  and $\dim(\calP_{G}) \leq 10g + \tfrac12\,(27 + \sqrt{1 + 24g})$ otherwise. 
\end{cor}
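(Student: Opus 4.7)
The plan is to assemble three ingredients: Theorem~\ref{th:dim_box}, Theorem~\ref{th:sur}, and the classical Heawood bound on the chromatic number of graphs embeddable in a surface (proved in full by Ringel and Youngs). The first two already give an upper bound on $\dim(\calP_{G})$ in terms of $g$ and $\chi(G)$, and the third converts $\chi(G)$ into an explicit function of $g$ whose form depends on whether $\Sigma$ is orientable.

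Concretely, I would begin by applying Theorem~\ref{th:dim_box} to obtain $\dim(\calP_{G}) \le 2\,\bo(G) + \chi(G) + 4$, then substitute the bound $\bo(G) \le 5g + 3$ from Theorem~\ref{th:sur} to get $\dim(\calP_{G}) \le 10g + \chi(G) + 10$. Next I would invoke the Heawood/Ringel--Youngs theorem, which asserts that $\chi(G) \le \tfrac12(7 + \sqrt{1+48g})$ whenever $\Sigma$ is orientable of genus $g \ge 1$, and $\chi(G) \le \tfrac12(7 + \sqrt{1+24g})$ whenever $\Sigma$ is non-orientable of (non-orientable) genus $g$. Plugging either bound into $10g + \chi(G) + 10$ and combining the constant $10$ with the $\tfrac{7}{2}$ coming from Heawood into $\tfrac{27}{2}$ yields the two stated formulas.

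There is essentially no real obstacle: the argument is a routine composition of the three cited results, and the arithmetic is immediate. The only point requiring mild care is to ensure that the notion of ``genus'' appearing in Theorem~\ref{th:sur} matches the one appearing in the Heawood bound in each of the orientable and non-orientable cases, so that the substitutions of $5g+3$ and of the Heawood expression are consistent.
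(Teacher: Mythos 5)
Your proposal is correct and follows exactly the same route as the paper: combine Theorem~\ref{th:dim_box} with Theorem~\ref{th:sur} to get $\dim(\calP_{G}) \le 10g + \chi(G) + 10$, then apply Heawood's bound on $\chi(G)$ in the orientable and non-orientable cases. The arithmetic checks out ($10 + \tfrac{7}{2} = \tfrac{27}{2}$), so nothing further is needed.
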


\begin{proof}
This follows from Theorems~\ref{th:sur}  
and~\ref{th:dim_box}, and Heawood's upper
bound on the chromatic number of $G$, namely  
$\chi(G) \leq  \tfrac12\,(7 + \sqrt{1 + 48g})$ if $\Sigma$ is orientable, and 
$\chi(G) \leq  \tfrac12\,(7 + \sqrt{1 + 24g})$ otherwise. 
\end{proof}

This confirms what Felsner, Li, and Trotter~\cite{FLT10} suggested as
an improvement of their result.

\section{Open questions}\label{sec:que}

The first question is whether the bounds obtained in
Section~\ref{sec:sur} are best possible.  We believe that the boxicity
of graphs embeddable in a surface of genus $g$ should rather be
$O(\sqrt{g})$. Since the complete graph $K_{2n}$ with a perfect
matching removed has boxicity $n$, this would be optimal. This example
also shows that the boxicity of graphs with no $K_t$-minor can be 
linear in $t$, while we only know a $O(t^4 \log^2 t)$  
upper bound (see the remark after Lemma~\ref{lem:acy}).\\

The \emph{edgewidth} of a graph $G$ embedded in a surface $\Sigma$ is
the length (number of edges) of a shortest noncontractible cycle in
$G$. Kawarabayashi and Mohar~\cite{KM10} proved that for every fixed
surface $\Sigma$, graphs embeddable in $\Sigma$ with sufficiently
large edgewidth are acyclically 7-colorable. It then follows from
Lemma~\ref{lem:acy} that these graphs have boxicity at most 42. We
believe that the following stronger statement is true:

\begin{conj}
For every fixed surface $\Sigma$ there exists an integer $e_\Sigma$ so that
every graph $G$ embeddable on $\Sigma$ with edgewidth at least $e_\Sigma$ has
boxicity at most three.
\end{conj}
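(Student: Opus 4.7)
The plan is to reduce the problem to Thomassen's planar boxicity theorem by cutting the embedded graph along noncontractible cycles, and then to exploit the large edgewidth hypothesis to glue the resulting planar representations back together without increasing the boxicity beyond $3$.

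First, I would use the structural consequences of large edgewidth. For a surface $\Sigma$ of Euler genus $g$ and any integer $N$, if the edgewidth of $G$ is sufficiently large (say $e_\Sigma \ge $ some function of $g$ and $N$), then standard arguments (iterating Schrijver-type results on disjoint noncontractible cycles, or using de Graaf--Schrijver / Brunet--Mohar--Richter collars) yield a system $\calC = \{C_1,\dots,C_h\}$ of $h = O(g)$ pairwise disjoint, induced noncontractible cycles in $G$ such that (i) cutting $\Sigma$ along $\calC$ produces a sphere with holes, hence $G$ cut along $\calC$ is planar, and (ii) each $C_i$ has $N$ pairwise disjoint, pairwise homotopic noncontractible cycles in its \emph{collar} neighborhood, forming a wide planar annulus $A_i \subseteq G$ around $C_i$.

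Next I would form the planar graph $\widetilde{G}$ obtained by cutting $G$ along every $C_i$ (so each $C_i$ is duplicated into $C_i^+$ and $C_i^-$). By Thomassen's theorem $\bo(\widetilde{G}) \le 3$. The crux is then to choose the three interval representations $\widetilde{I}_1, \widetilde{I}_2, \widetilde{I}_3$ of $\widetilde{G}$ in a \emph{canonical} way on each duplicated cycle, so that the identifications $C_i^+ = C_i^- = C_i$ yield three interval graphs $I_1, I_2, I_3$ on $V(G)$ with $I_1 \cap I_2 \cap I_3 = G$. Concretely, I would aim to have each $I_j$ map the vertices of $C_i^+$ and $C_i^-$ to the same intervals on the real line, with consecutive overlap along the cycle order, so that identification does not create spurious edges between $V(G) \setminus V(C_i)$ and $V(C_i)$, and so that the original edges across $C_i$ in $G$ are captured.

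To achieve this canonical matching, I would use the wide annulus $A_i$: inside $A_i$ there is enough room to deform any boxicity-$3$ representation (in the spirit of Thomassen's explicit construction) so that on the outermost cycles of $A_i$ each of the three interval graphs assigns intervals according to a prescribed cyclic template depending only on $C_i$ (not on the local graph structure). This is a quantitative, constructive strengthening of Thomassen's theorem, asserting that any planar graph containing a long annular sequence of nested cycles admits a boxicity-$3$ representation in which the interval positions on the boundary cycle are prescribed in advance. The main obstacle is precisely this canonization: Thomassen's construction uses the specific structure of the planar graph globally, and making it boundary-prescribed on $O(g)$ cycles simultaneously is delicate. A possible route is to apply Thomassen's argument separately to each planar piece cut out by $\calC$, each of which contains the two annuli flanking the boundary cycles, and then to invoke the large width of these annuli to locally reroute intervals by a sweeping argument until the prescribed template is realized on the boundary. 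Once the canonical matching holds across all $C_i$ simultaneously, reidentifying the two copies of each cycle yields $\bo(G) \le 3$, as desired.
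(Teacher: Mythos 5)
This statement is labelled as a \emph{conjecture} in the paper: the authors do not prove it, and only record the much weaker consequence of Kawarabayashi--Mohar (boxicity at most $42$ for locally planar graphs, via acyclic $7$-colorability and Lemma~\ref{lem:acy}). So there is no proof in the paper to compare yours against; you are attempting an open problem.

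Your attempt has a genuine gap, and you have in fact named it yourself: the entire difficulty of the conjecture is concentrated in the step you call ``canonization,'' namely a boundary-prescribed strengthening of Thomassen's theorem asserting that a planar graph with wide annuli around prescribed boundary cycles admits a boxicity-$3$ representation in which the intervals assigned to the boundary vertices follow a template fixed in advance. No such theorem is known, and Thomassen's construction gives no control of this kind; asserting that a ``sweeping argument'' inside the annulus will realize the template is not an argument. There is also a second, more structural problem with the cut-and-reidentify scheme: even if each of the three interval graphs $\widetilde{I}_j$ agreed exactly on $C_i^+$ and $C_i^-$, the intersection $\widetilde{I}_1\cap\widetilde{I}_2\cap\widetilde{I}_3=\widetilde{G}$ only controls the \emph{intersection}, not the individual interval graphs; after identification, a vertex $u$ lying near $C_i^+$ and a vertex $w$ lying near $C_j^-$ (or near the opposite side of the same cycle) may have overlapping intervals in all three $I_j$ even though $uw\notin E(G)$, because in $\widetilde{G}$ their non-adjacency could have been witnessed by interval positions that the identification destroys. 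Preventing such spurious edges requires global control over all three representations simultaneously, not merely agreement on the boundary cycles, and nothing in the proposal supplies it. The best bounds that the techniques of this paper actually yield for large edgewidth are $42$ (via acyclic coloring) in general and $4$ for large girth; closing the gap to $3$ remains open.
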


It follows from a theorem of Thomassen~\cite{Tho86} that triangle-free
planar graphs have boxicity at most two. Since there exist trees that are not
interval graphs, a natural question is whether, for every surface
$\Sigma$, graphs embeddable in $\Sigma$ and having sufficiently large
girth (length of a shortest cycle) have boxicity at most two. We prove
that the following slightly weaker statement holds:

\begin{thm}
  For every fixed surface $\Sigma$ there exists some integer
  $g_\Sigma$ such that every graph with girth at least $g_\Sigma$
  embeddable in $\Sigma$ has boxicity at most 4.
\end{thm}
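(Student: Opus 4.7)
The plan is to proceed by induction on the Euler genus $g$ of $\Sigma$, with $g_\Sigma$ chosen recursively and growing sufficiently fast in $g$. The base case $g = 0$ follows immediately from Thomassen's theorem on triangle-free planar graphs (cited just before the theorem statement), which gives boxicity at most $2$; we may take $g_{\Sigma_0} = 4$.

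For the inductive step with $g \geq 1$, suppose $G$ is embedded in $\Sigma$ with girth at least $g_\Sigma$. If $G$ has no noncontractible cycle then $G$ is planar and the base case applies. Otherwise, let $C$ be a shortest noncontractible cycle in $G$; by Lemma~\ref{lem:sur3} we may assume $C$ is induced, and by construction $|V(C)| \geq g_\Sigma$. Lemma~\ref{lem:sur4} implies that every component of $G \setminus V(C)$ embeds in a surface of strictly smaller Euler genus, and since girth is monotone under taking subgraphs, the induction hypothesis gives $\bo(G \setminus V(C)) \leq 4$. The essential leverage from the large girth is that the attachment of $V(C)$ to the rest of $G$ is extremely sparse: a vertex outside $V(C)$ with two neighbors on $C$ would create a shortcut cycle of length at most $|V(C)|/2 + 2$, which must be contractible (else it would be a noncontractible cycle shorter than $C$), and hence of length at least the girth $g_\Sigma$. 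Choosing $C$ of length comparable to $g_\Sigma$ (invoking, if needed, a Schrijver-type theorem once $g_\Sigma$ is large enough to rule out the case of an abnormally long shortest noncontractible cycle), every vertex outside $V(C)$ has at most one neighbor on $C$, so the bipartite graph between $V(C)$ and its neighbors is a star forest and $G[V(C)]$ is merely an induced cycle.

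The main obstacle is then to realize this simple ``cycle-plus-star-forest'' attachment structure inside the four interval graphs already furnished by the induction on $G \setminus V(C)$, without spending any extra coordinate. Naively, Lemma~\ref{lem:sur1} adds $\lceil |V(C)|/2 \rceil$ and Corollary~\ref{cor:cor1} adds $5$ to the boxicity, either of which is fatal for the target bound $4$. Instead, the plan is to build explicit canonical extensions of the four interval representations of $G \setminus V(C)$ to $V(C)$, placing each vertex of the induced cycle $C$ at a position that, in all four extensions simultaneously, is compatible with its (unique) outside-neighbor and realizes the cycle edges along $C$. The style is analogous to the explicit box construction in the proof of Theorem~\ref{th:sur}, but here the sparse attachment made possible by the girth assumption should allow the four extensions to agree on the geometry of $V(C)$. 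Verifying that every non-edge incident to $V(C)$ is witnessed as a non-edge in some extension, and dealing cleanly with the corner case in which $C$ turns out to be much longer than $g_\Sigma$ (by selecting $C$ from a family of many pairwise-disjoint noncontractible cycles provided by Schrijver's theorem), is the most delicate part of the argument.
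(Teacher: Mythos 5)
Your proposal takes a completely different route from the paper, and unfortunately it has genuine gaps at both of its crucial steps. First, the sparsity claim: if a vertex $v$ outside the shortest noncontractible cycle $C$ had two neighbors $v_i,v_j$ on $C$, the 3-Path Property only forces the cycle formed by $v$ and the \emph{shorter} arc of $C$ between $v_i$ and $v_j$ to be contractible, hence of length at least the girth; this yields $|V(C)|\ge 2g_\Sigma-4$, not a contradiction. You acknowledge this and propose to exclude ``an abnormally long shortest noncontractible cycle'' by ``a Schrijver-type theorem,'' but no such tool is available: there is no upper bound on the length of a shortest noncontractible cycle in terms of the girth, and Schrijver's theorem (used in the paper only for the torus) produces many disjoint noncontractible cycles from large facewidth --- it says nothing that would cap the length of $C$. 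Second, and more seriously, the heart of your argument --- extending the four interval representations of $G\setminus V(C)$ to $V(C)$ ``without spending any extra coordinate'' --- is only announced, never carried out. The induced cycle $C$ contributes on the order of $|V(C)|^2$ internal non-edges, plus all non-edges between $V(C)$ and its non-neighbours; since a cycle of length at least four is not an interval graph, these non-edges must be distributed among at least two of the four representations, which are already fully committed to witnessing the non-edges of $G\setminus V(C)$. No construction is offered and there is no reason to believe one exists; as you yourself note, the black-box tools of the paper (Lemma~\ref{lem:sur1}, Corollary~\ref{cor:cor1}) would push the bound well past $4$.

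The paper's proof avoids surfaces surgery entirely. It invokes the result of Albertson, Chappell, Kierstead, K\"undgen, and Ramamurthi that for girth at least $g_\Sigma$ the vertex set of $G$ partitions into a forest $F$ and a stable set $S$ whose vertices are pairwise at distance at least three in $G$. It then writes $G=G_1\cap G_2$, where $G_1$ adds all edges between pairs meeting $S$ (so its non-edges live in the forest $G[F]$, giving $\bo(G_1)\le 2$ by canonical extensions as in Lemma~\ref{lem:acy}), and $G_2$ adds all edges inside $F$ (so $\bo(G_2)\le 2$ by the argument of~\cite[Proof of Theorem 1]{CFS08}, using that each vertex of $F$ has at most one neighbour in $S$). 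This gives $\bo(G)\le 4$ with no induction on the genus. I would recommend abandoning the noncontractible-cycle approach for this statement and looking instead for a global structural partition of this kind.
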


\begin{proof}
  It is well-known (see~\cite{ACKKR04}) that there exists an integer
  $g_\Sigma$ such that the vertex set of every graph $G$ embeddable on $\Sigma$
  and having girth at least $g_\Sigma$ can be partitioned into a forest $F$
  and a stable set $S$, in such way that every two vertices of $S$ are
  at distance at least three in $G$.

  Consider the graph $G_1$ obtained from $G$ by adding an edge
  between every pair of non-adjacent vertices $u,v$, such that at
  least one of $u,v$ is in $S$. As remarked in the proof of
  Lemma~\ref{lem:acy}, $\bo(G_1)\le 2$. Observe now that every vertex
  of $F$ has at most one neighbor in the  stable set $S$. Using
  this property, it can be deduced from~\cite[Proof of Theorem
  1]{CFS08} that the graph $G_2$ obtained from $G$ by adding all
  possible edges between pairs of vertices of $F$ has boxicity at most
  two. Since $G=G_1 \cap G_2$, it follows that $\bo(G)\le 4$.
\end{proof}

\paragraph{\bf Acknowledgment}

We would like to thank Bojan Mohar for interesting discussions about
surface separating noncontractible cycles, and the three anonymous referees for
their helpful comments on an earlier version of this paper. We are especially 
grateful to a referee whose suggestions lead to an improved constant in 
Theorem~\ref{th:sur}.

\end{document}